\begin{document}
\newtheorem{theo}{Theorem}[section]
\newtheorem{atheo}{Theorem*}
\newtheorem{prop}[theo]{Proposition}
\newtheorem{aprop}[atheo]{Proposition*}
\newtheorem{lemma}[theo]{Lemma}
\newtheorem{alemma}[atheo]{Lemma*}
\newtheorem{exam}[theo]{Example}
\newtheorem{coro}[theo]{Corollary}
\theoremstyle{definition}
\newtheorem{defi}[theo]{Definition}
\newtheorem{rem}[theo]{Remark}


\newcommand{\Bb}{{\bf B}}
\newcommand{\Cb}{{\mathbb C}}
\newcommand{\Nb}{{\mathbb N}}
\newcommand{\Qb}{{\mathbb Q}}
\newcommand{\Rb}{{\mathbb R}}
\newcommand{\Zb}{{\mathbb Z}}
\newcommand{\Ac}{{\mathcal A}}
\newcommand{\Bc}{{\mathcal B}}
\newcommand{\Cc}{{\mathcal C}}
\newcommand{\Dc}{{\mathcal D}}
\newcommand{\Fc}{{\mathcal F}}
\newcommand{\Ic}{{\mathcal I}}
\newcommand{\Jc}{{\mathcal J}}
\newcommand{\Kc}{{\mathcal K}}
\newcommand{\Lc}{{\mathcal L}}
\newcommand{\Oc}{{\mathcal O}}
\newcommand{\Pc}{{\mathcal P}}
\newcommand{\Sc}{{\mathcal S}}
\newcommand{\Tc}{{\mathcal T}}
\newcommand{\Uc}{{\mathcal U}}
\newcommand{\Vc}{{\mathcal V}}

\author{Nik Weaver}

\title [Unique preduals for Lipschitz spaces]
       {On the unique predual problem for Lipschitz spaces}


\begin{abstract}
For any metric space $X$, the predual of ${\rm Lip}(X)$ is unique.

A previous version of this manuscript, which is also the published version ({\it Math.\ Proc.\ Cambridge Philos.\ Soc.\ \bf 165} (2018), 467–473), additionally stated ``If
$X$ has finite diameter or is complete and convex --- in particular, if it
is a Banach space --- then the predual of ${\rm Lip}_0(X)$ is unique.'' However, the proof of a crucial lemma, Lemma 3.1 in the previous version, was faulty. The error in that proof lay in assuming that the limit, for the topology induced by $\tilde{W}$, of a net in the unit ball would have to lie in the unit ball. But we do not know that $\tilde{W}$ is 1-norming.

This error was pointed out by Manuel González, as relayed to me by Rubén Medina.

The reduction from ``complete and convex'' to ``finite diameter'' is still valid, and is retained in the present version.
\end{abstract}

\maketitle

\section{Lipschitz spaces}

Let $(X,\rho)$ be a metric space. The {\it Lipschitz number} of a function
$f: X \to \mathbb{R}$ is the quantity
$$L(f) = \sup_{p \neq q} \frac{|f(p) - f(q)|}{\rho(p,q)};$$
$f$ is {\it Lipschitz} if $L(f) < \infty$. Note that the Lipschitz
number of any constant function is zero.
We define ${\rm Lip}(X)$ to be the space of bounded Lipschitz functions
from $X$ into $\mathbb{R}$, equipped with the norm
$\|f\|_L = \max(L(f),\|f\|_\infty)$.

If $X$ is a pointed metric space, i.e., it is equipped with a distinguished
``base point'' $e$, then we define ${\rm Lip}_0(X)$ to be the space of all
(possibly unbounded)
Lipschitz functions from $X$ into $\mathbb{R}$ which vanish at $e$, with
norm $L(\cdot)$. Since the zero function is the only constant function
which vanishes at the base point, Lipschitz number is a norm on this space,
not merely a seminorm. The Banach space structure of
${\rm Lip}_0(X)$ does not depend on the choice of $e$; if $e'$ is another
base point then the map $f \mapsto f - f(e')\cdot 1_X$ is an isometric
linear map taking the Lipschitz functions which vanish at $e$ to the
Lipschitz functions which vanish at $e'$.

${\rm Lip}$ and ${\rm Lip}_0$ spaces are closely related. Write $\cong$ for
isometric isomorphism.

\begin{prop}\label{lipvslip0}
(\cite[Proposition 1.7.1, Theorem 1.7.2, and Proposition 1.7.3]{W})
Given any metric space $X$, let $Y$ be the pointed set $X \cup \{e\}$
equipped with the metric which sets
$$\rho^Y(p,q) = \min(\rho^X(p,q),2)$$
for all $p,q \in X$ and $\rho^Y(p,e) = 1$ for all $p \in X$. Then
${\rm Lip}(X) \cong {\rm Lip}_0(Y)$. ${\rm Lip}$ spaces are effectively
the special case of ${\rm Lip}_0$ spaces for which the greatest element
of the unit ball is a multiplicative identity.
\end{prop}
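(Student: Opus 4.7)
The plan is to construct an explicit isometric isomorphism $T \colon {\rm Lip}(X) \to {\rm Lip}_0(Y)$ by sending $f \in {\rm Lip}(X)$ to the extension $\tilde f \colon Y \to \Rb$ with $\tilde f|_X = f$ and $\tilde f(e) = 0$, and to check that the inverse is restriction to $X$. First I would confirm that $\rho^Y$ is indeed a metric on $Y$; only the triangle inequality is not immediate, and the sole nontrivial case is $p, q \in X$ with the path routed through $e$, where one uses $\rho^Y(p,q) \leq 2 = \rho^Y(p,e) + \rho^Y(e,q)$.

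Next I would compute $L(\tilde f)$ by splitting the defining supremum according to whether both arguments lie in $X$. The base-point pairs contribute $\sup_{p \in X} |f(p)| = \|f\|_\infty$, since $\rho^Y(p,e) = 1$. For $p \neq q$ in $X$, if $\rho^X(p,q) \leq 2$ then $\rho^Y(p,q) = \rho^X(p,q)$ and the ratio is bounded by $L(f)$; if $\rho^X(p,q) > 2$ then $\rho^Y(p,q) = 2$ and the ratio is at most $\|f\|_\infty$. Thus $L(\tilde f) \leq \max(L(f), \|f\|_\infty) = \|f\|_L$. The reverse inequality is analogous: the base-point pairs already yield $L(\tilde f) \geq \|f\|_\infty$, while for $p \neq q$ in $X$ with $\rho^X(p,q) \leq 2$ the ratios for $L(\tilde f)$ and $L(f)$ coincide, and for $\rho^X(p,q) > 2$ one has $|f(p) - f(q)|/\rho^X(p,q) \leq 2\|f\|_\infty/\rho^X(p,q) < \|f\|_\infty \leq L(\tilde f)$. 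Hence $L(\tilde f) = \|f\|_L$.

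For surjectivity, given $g \in {\rm Lip}_0(Y)$ the restriction $f = g|_X$ satisfies $L(f) \leq L(g)$ because $\rho^Y \leq \rho^X$ on $X \times X$, and $|f(p)| = |g(p) - g(e)| \leq L(g)\,\rho^Y(p,e) = L(g)$, so $f \in {\rm Lip}(X)$ and $Tf = g$. The concluding sentence of the proposition is a descriptive remark: the function equal to $1$ on $X$ and $0$ at $e$ belongs to the unit ball of ${\rm Lip}_0(Y)$, dominates every element pointwise in absolute value, and serves as a multiplicative identity, which characterizes precisely those pointed metric spaces arising from this construction; I would treat it as a remark rather than a separate formal claim. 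The only real pitfall is bookkeeping the two regimes $\rho^X(p,q) \leq 2$ and $\rho^X(p,q) > 2$, and the threshold $2$ is chosen precisely so that the two ingredients $L(f)$ and $\|f\|_\infty$ of $\|f\|_L$ merge into the single quantity $L(\tilde f)$.
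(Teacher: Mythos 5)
Your proof is correct and is essentially a careful write-up of the argument the paper itself sketches in the paragraph following the proposition (the result is quoted from \cite{W} without formal proof): extend by zero at the base point, note that the slopes to $e$ reproduce $\|f\|_\infty$, and observe that truncating the metric at $2$ does not change the norm. The case analysis and the surjectivity check are all in order.
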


The ${\rm Lip}(X)$ construction is insensitive to distances greater than $2$
because if $\rho(p,q) > 2$ then the slope $\frac{|f(p) - f(q)|}{\rho(p,q)}$
is dominated by $\|f\|_\infty$. Thus the metric can be truncated at $2$
without affecting the norm on ${\rm Lip}(X)$. The idea of Proposition
\ref{lipvslip0} is that once this is done, we can attach a base point
whose distance to every point of $X$ is $1$ and extend any Lipschitz
function on $X$ to be zero at the base point. The Lipschitz number of the
extended function now includes the slopes $\frac{|f(p) - f(e)|}{\rho(p,e)} =
|f(p)|$, whose collective contribution to this Lipschitz number exactly
matches the sup norm of the original function. Thus ${\rm Lip}$ spaces are
effectively the special case of ${\rm Lip}_0$ spaces which arises when
the base point is exactly one unit away from every other point. This
condition is characterized by the property that the greatest element of
the unit ball is the function which is $0$ at the base point and constantly
$1$ elsewhere, i.e., a multiplicative identity.

Every ${\rm Lip}_0$ space is a dual Banach space. This was first shown by
Arens and Eells in \cite{AE}, but was later rediscovered in varying
degrees of generality by a number of authors. (I know of four essentially
different methods of constructing the predual.) In \cite{W} I termed the
natural preduals ``Arens-Eells spaces'', but in their celebrated paper
\cite{GK} Godefroy and Kalton renamed them ``Lipschitz-free spaces''.
As a result these spaces now go by two different names in the literature.

The most important feature of the Arens-Eells spaces is a universal
property: for any pointed metric space $X$ there is a natural isometric
embedding $\iota: X \to {\rm \AE}(X)$ of $X$ in its Arens-Eells space,
which takes the base point of $X$ to the origin of ${\rm \AE}(X)$, and
which is universal among all nonexpansive base point preserving maps from
$X$ into any Banach space. This fact is due to me and first appeared in
\cite{W}, although it could have been deduced from results in an earlier
paper of Kadets \cite{K} (which I unfortunately was not aware of). Indeed,
one can simply define ${\rm \AE}(X)$ to be this universal space, whose
existence and uniqueness is routine, and then easily check that its dual
space is isometrically
isomorphic to ${\rm Lip}_0(X)$. This was pointed out in \cite{DTZ}.

In \cite{W} I asked whether the predual of ${\rm Lip}_0(X)$ is unique.
Since then this problem has been noted by various authors, but the only
previous work in this direction I know of is in \cite{Gd}, where it is
shown that ${\rm Lip}_0(X) \cong L^\infty[0,1]$ when $X$ is a separable
metric tree. This implies that the predual of ${\rm Lip}_0(Y)$ is unique
if $Y$ is isometrically contained in such a space. In this paper I show that for any $X$ the predual of ${\rm Lip}(X)$ is unique.

\section{${\rm Lip}$ spaces}

The main fact about ${\rm \AE}(X)$ we need is the following.

\begin{prop}\label{weakstar}
(\cite[Theorem 2.2.2]{W}) Let $X$ be a pointed metric space and let
$(f_\lambda)$ be a bounded net in ${\rm Lip}_0(X)$. Then $f_\lambda \to f
\in {\rm Lip}_0(X)$ weak* (relative to the predual ${\rm \AE}(X)$) if and only if
$f_\lambda \to f$ pointwise.
\end{prop}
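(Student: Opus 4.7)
The plan is to use the universal property of ${\rm \AE}(X)$, recalled just before the statement, together with the uniform boundedness of the net. Under the duality identification ${\rm \AE}(X)^* \cong {\rm Lip}_0(X)$, the canonical embedding $\iota: X \to {\rm \AE}(X)$ satisfies $\langle f, \iota(p)\rangle = f(p)$ for every $f \in {\rm Lip}_0(X)$ and every $p \in X$.

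The forward implication is immediate: testing weak* convergence against the particular elements $\iota(p) \in {\rm \AE}(X)$ gives $f_\lambda(p) = \langle f_\lambda, \iota(p)\rangle \to \langle f, \iota(p)\rangle = f(p)$, which is exactly pointwise convergence.

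For the reverse implication, suppose $(f_\lambda)$ is bounded with $\|f_\lambda\|_L \le M$ and converges pointwise to $f$; note that $f \in {\rm Lip}_0(X)$ with $\|f\|_L \le M$ since Lipschitz bounds pass to pointwise limits. Consider
\[
S = \{\phi \in {\rm \AE}(X) : \langle f_\lambda,\phi\rangle \to \langle f,\phi\rangle\}.
\]
By linearity of the pairing, $S$ is a linear subspace of ${\rm \AE}(X)$, and by pointwise convergence it contains $\iota(p)$ for every $p \in X$, hence also the linear span of $\iota(X)$. The universal property forces this linear span to be dense in ${\rm \AE}(X)$: otherwise its closure would be a strictly smaller Banach space into which $X$ embeds isometrically and base-point preservingly, contradicting universality. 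Combining density of $S$ with the uniform bound $\|f_\lambda - f\| \le 2M$ on the associated functionals, a standard three-epsilon argument extends the convergence from the dense subspace to all of ${\rm \AE}(X)$, giving $f_\lambda \to f$ weak*.

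The only nontrivial point is density of ${\rm span}\,\iota(X)$ in ${\rm \AE}(X)$, which is a direct consequence of the universal property and can be taken as essentially part of the construction; after that, the argument is just the standard fact that a uniformly bounded net of functionals which converges on a dense set converges weak*.
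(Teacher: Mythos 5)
Your proof is correct. Note, though, that the paper offers no proof of this proposition to compare against: it is quoted verbatim from \cite[Theorem 2.2.2]{W}. On its own merits your argument is a valid and standard one: point evaluations give the forward direction, and density of ${\rm span}\,\iota(X)$ plus the uniform bound $\|f_\lambda - f\|\leq 2M$ gives the reverse. Two small remarks. First, your density argument as phrased is slightly loose: the mere existence of an isometric base-point-preserving embedding of $X$ into the closure of ${\rm span}\,\iota(X)$ contradicts nothing by itself; you need the \emph{uniqueness} clause of the universal property (the induced contraction ${\rm \AE}(X)\to\overline{{\rm span}}\,\iota(X)\hookrightarrow{\rm \AE}(X)$ fixes $\iota(X)$, hence must be the identity, forcing the closed span to be everything). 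Alternatively, in the construction this paper actually invokes (see Example \ref{posnegex}), ${\rm \AE}(X)$ is \emph{defined} as the closed span of the $\delta_p$, so density is immediate. Second, the usual textbook proof is different in flavor: the unit ball is weak* compact by Alaoglu and the identity map to the (Hausdorff) topology of pointwise convergence is continuous, hence a homeomorphism on the ball; your density-plus-equicontinuity route avoids compactness and is equally legitimate.
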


Give ${\rm Lip}_0(X)$ the pointwise partial order, i.e., $f \leq g$ if
$f(p) \leq g(p)$ for all $p \in X$. Then the pointwise join or meet of any
two Lipschitz functions is again Lipschitz; indeed, it is easy to see that
the join or meet of any bounded family of Lipschitz functions is again
Lipschitz \cite[Proposition 1.5.5]{W}. This lattice structure is
related to the weak* topology.

\begin{lemma}\label{latticegenlemma}
Let $X$ be a pointed metric space, let $(f_\lambda)$ be a bounded net in
${\rm Lip}_0(X)$, and let $f \in {\rm Lip}_0(X)$. Suppose that
$f_\lambda \to f$ weak* relative to the predual ${\rm \AE}(X)$. Then
$f = \bigvee_\lambda \bigwedge_{\kappa \geq \lambda} f_\kappa$.
\end{lemma}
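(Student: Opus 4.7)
The plan is to invoke Proposition \ref{weakstar} to reduce weak$^*$ convergence to pointwise convergence, and then recognize the asserted identity as nothing more than the pointwise statement $\liminf_\lambda f_\lambda = \lim_\lambda f_\lambda = f$. The main content is simply checking that the relevant meets and joins make sense as elements of ${\rm Lip}_0(X)$ and that lattice operations there are computed pointwise.

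More concretely, first I would note that since $(f_\lambda)$ is bounded, Proposition \ref{weakstar} gives $f_\lambda(p) \to f(p)$ for every $p \in X$. Next, for each index $\lambda$ the tail family $\{f_\kappa : \kappa \geq \lambda\}$ is bounded in ${\rm Lip}_0(X)$, so by the cited Proposition 1.5.5 of \cite{W} its pointwise infimum
$$g_\lambda(p) := \inf_{\kappa \geq \lambda} f_\kappa(p)$$
defines an element $g_\lambda = \bigwedge_{\kappa \geq \lambda} f_\kappa \in {\rm Lip}_0(X)$. The net $(g_\lambda)$ is increasing in $\lambda$ (infimum over a smaller set is larger) and is bounded above (by any uniform bound on the $f_\kappa$ in sup norm along tails, or more simply by $\sup_\kappa f_\kappa$ pointwise), so another application of the same lattice fact yields $h := \bigvee_\lambda g_\lambda \in {\rm Lip}_0(X)$, with $h(p) = \sup_\lambda g_\lambda(p)$ at each $p$.

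It then remains to identify $h$ with $f$, which I would do pointwise. Fix $p \in X$. By construction, $h(p) = \sup_\lambda \inf_{\kappa \geq \lambda} f_\kappa(p) = \liminf_\lambda f_\lambda(p)$. Since $f_\lambda(p) \to f(p)$, the liminf equals the limit, so $h(p) = f(p)$. This holds for every $p$, so $h = f$, which is the desired equality.

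I do not foresee a serious obstacle: the only point to be careful about is that joins and meets of bounded families in ${\rm Lip}_0(X)$ coincide with pointwise sup/inf and remain Lipschitz, which is given by \cite[Proposition 1.5.5]{W}. Everything else is the trivial measure-theoretic tautology that along a convergent net the liminf equals the limit.
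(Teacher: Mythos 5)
Your proposal is correct and follows essentially the same route as the paper: boundedness of the net guarantees the tail meets and the join are well-defined elements of ${\rm Lip}_0(X)$ (computed pointwise), and then Proposition \ref{weakstar} reduces the identity to the pointwise fact that $\liminf_\lambda f_\lambda(p) = \lim_\lambda f_\lambda(p) = f(p)$, which the paper simply writes out with an explicit $\epsilon$ argument. (One small caution: elements of ${\rm Lip}_0(X)$ need not be bounded in sup norm, so the upper bound for the increasing net $(g_\lambda)$ should be taken pointwise from the norm bound, $|f_\kappa(p)| \leq C\rho(p,e)$, as your parenthetical alternative already does.)
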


\begin{proof}
Since $(f_\lambda)$ is bounded,
$g = \bigvee_\lambda \bigwedge_{\kappa \geq \lambda} f_\kappa$ is a
well-defined element of ${\rm Lip}_0(X)$. Now given $p \in X$ and
$\epsilon > 0$, we can find $\lambda_0$ such that
$|f_\kappa(p) - f(p)| \leq \epsilon$ for all $\kappa \geq \lambda_0$.
Thus $\bigwedge_{\kappa \geq \lambda} f_\kappa \leq f(p) + \epsilon$
for any $\lambda$, and $\bigwedge_{\kappa \geq \lambda_0} f_\kappa \geq
f(p) - \epsilon$. This shows that $|g(p) - f(p)| \leq \epsilon$. Since
$p$ and $\epsilon$ were arbitrary, we conclude that $g = f$.
\end{proof}

The next result on our way to the uniqueness results is also of independent
interest. Say that a linear functional $\phi \in {\rm Lip}_0(X)^*$ is
{\it positive}, and write $\phi \geq 0$, if $f \geq 0$ implies
$\phi(f) \geq 0$. Say that $\phi$ is {\it normal} if it satisfies
$\phi(f_\lambda) \to \phi(f)$ whenever $(f_\lambda)$ is a bounded increasing
net in ${\rm Lip}_0(X)$ and $f = \bigvee f_\lambda$.

\begin{theo}\label{normalweakstarthm}
Let $X \in \mathcal{M}_0$ and suppose $\phi \in {\rm Lip}_0(X)^*$ is
positive. Then $\phi$ is weak* continuous relative to the predual
${\rm \AE}(X)$ if and only if it is normal.
\end{theo}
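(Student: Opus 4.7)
The plan is to handle the two implications separately. The forward direction should fall out immediately from Proposition \ref{weakstar}: if $\phi$ is weak* continuous and $(f_\lambda)$ is a bounded increasing net with $f = \bigvee f_\lambda$, then $f_\lambda(p) \uparrow f(p)$ pointwise for each $p$, so Proposition \ref{weakstar} gives $f_\lambda \to f$ weak*, whence $\phi(f_\lambda) \to \phi(f)$ and $\phi$ is normal.

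For the reverse direction, assume $\phi$ is normal. I would invoke the Krein-Smulian theorem to reduce the task to showing that $\phi$ is weak* continuous on the closed unit ball of ${\rm Lip}_0(X)$, i.e., that $\phi(f_\lambda) \to \phi(f)$ whenever a bounded net $(f_\lambda)$ converges weak* to $f$. The strategy is a squeeze: I sandwich each $f_\kappa$ (past some index) between an increasing net converging up to $f$ and a decreasing net converging down to $f$. Applying Lemma \ref{latticegenlemma} to both $(f_\lambda)$ and $(-f_\lambda)$ produces the identities
$$f = \bigvee_\lambda g_\lambda = \bigwedge_\lambda h_\lambda, \qquad g_\lambda = \bigwedge_{\kappa \geq \lambda} f_\kappa, \quad h_\lambda = \bigvee_{\kappa \geq \lambda} f_\kappa,$$
with $g_\lambda \leq f_\kappa \leq h_\lambda$ whenever $\kappa \geq \lambda$. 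Positivity of $\phi$ upgrades these pointwise inequalities to $\phi(g_\lambda) \leq \phi(f_\kappa) \leq \phi(h_\lambda)$. Applying normality to the bounded increasing net $(g_\lambda)$, whose join is $f$, yields $\phi(g_\lambda) \to \phi(f)$; applying normality to the bounded increasing net $(-h_\lambda)$, whose join is $-f$, yields $\phi(h_\lambda) \to \phi(f)$. The squeeze then gives $\phi(f_\kappa) \to \phi(f)$, as needed.

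The main conceptual point is recognizing that normality — a condition phrased only for increasing nets — can be leveraged on the decreasing net $(h_\lambda)$ by passing to negatives, which is what lets the one-sided hypothesis control the full squeeze. The other critical ingredient is the Krein-Smulian reduction to bounded sets, without which one would have no obvious way to pass from a hypothesis about order-convergence to weak* continuity on all of ${\rm Lip}_0(X)$; positivity of $\phi$ is what makes the lattice structure speak directly to the value of $\phi$ on individual net elements.
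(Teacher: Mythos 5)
Your proof is correct and follows essentially the same route as the paper: the forward direction via Proposition \ref{weakstar}, and the reverse direction via Krein--Smulian reduction to bounded nets, Lemma \ref{latticegenlemma}, positivity, and normality. The only cosmetic difference is that you run a two-sided squeeze with both $\bigwedge_{\kappa \geq \lambda} f_\kappa$ and $\bigvee_{\kappa \geq \lambda} f_\kappa$ to get $\phi(f_\lambda) \to \phi(f)$ directly, whereas the paper packages the same two one-sided estimates as weak* closedness of the sublevel sets $\phi^{-1}((-\infty,a])$ and $\phi^{-1}([a,\infty))$.
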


\begin{proof}
The forward direction is easy because if $(f_\lambda)$ is a bounded
increasing net then $f_\lambda \to \bigvee f_\lambda$ boundedly pointwise
and hence weak*
(Proposition \ref{weakstar}). For the reverse direction, assume $\phi$ is
normal; since the intervals of the form $(a,\infty)$ and $(-\infty,a)$
generate the topology on $\mathbb{R}$ it will suffice to show that for
every $a \in \mathbb{R}$ the sets $\phi^{-1}((-\infty, a])$ and
$\phi^{-1}([a,\infty))$ are weak* closed. We just consider the first
case, as the second is similar. By the Krein-Smulian theorem, it will
suffice to show that $\phi^{-1}((-\infty,a])$ contains the limits of all
bounded weak* convergent nets.

Thus let $(f_\lambda) \subseteq \phi^{-1}((-\infty,a])$ be bounded and suppose
it converges weak* to $f \in {\rm Lip}_0(X)$. By Lemma \ref{latticegenlemma}
we have $f = \bigvee_\lambda \bigwedge_{\kappa \geq \lambda} f_\kappa$. Then
$\phi(f_\lambda) \leq a$ for all $\lambda$, so by positivity we know that
$\phi(\bigwedge_{\kappa \geq \lambda} f_\kappa) \leq a$ for all $\lambda$.
But by normality
$$\phi\left(\bigwedge_{\kappa \geq \lambda} f_\kappa\right) \to \phi(f),$$
so that $\phi(f) \leq a$ as well. Thus $f \in \phi^{-1}((-\infty,a])$,
as desired.
\end{proof}

Now we prove the uniqueness result for ${\rm Lip}(X)$. The proof follows
the strategy used by Sakai to prove uniqueness of von Neumann algebra
preduals \cite{S}. The key property of ${\rm Lip}(X)$ used in the proof,
whose analog in other ${\rm Lip}_0$ spaces fails, is the fact
that if $f \in {\rm Lip}(X)$ satisfies $f \geq 0$ and $\|f\|_L \leq 1$,
then $\|f - 1_X\|_L \leq 1$ also. Another way to say this is that the
unit ball $[{\rm Lip}(X)]_1$ and its positive part $[{\rm Lip}(X)]_1^+$
are related by
$$[{\rm Lip}(X)]_1^+ \subseteq [{\rm Lip}(X)]_1 + 1_X.$$

A dual Banach space $V^*$ has a {\it unique predual} if $V^* \cong W^*$
implies $V \cong W$, for any Banach space $W$. An a priori stronger
condition is that every isometric isomorphism between $V^*$ and another
dual space $W^*$ is weak* continuous. (It would then have to be a weak*
homeomorphism and its adjoint would take $W \subseteq W^{**}$ isometrically
onto $V \subseteq V^{**}$.) In this case $V^*$ is said to have
a {\it strongly unique predual}. It is not known whether there are any
spaces whose predual is unique but not strongly unique.

\begin{theo}\label{aeuniquethm}
Let $X$ be a metric space and let $Y$ be as in Proposition \ref{lipvslip0}.
Then ${\rm \AE}(Y)$ is the strongly unique Banach space predual of
${\rm Lip}(X)$.
\end{theo}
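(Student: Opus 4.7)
The plan is to follow Sakai's approach to uniqueness of von Neumann algebra preduals. By Proposition \ref{lipvslip0} we have ${\rm Lip}(X) \cong {\rm Lip}_0(Y)$ with natural predual ${\rm \AE}(Y)$, so it suffices to show that ${\rm \AE}(Y)$ is the strongly unique predual of ${\rm Lip}_0(Y)$. Let $W \subseteq {\rm Lip}_0(Y)^*$ be any other predual and let $\tau_W$ be the induced weak* topology on ${\rm Lip}_0(Y)$; the goal is $W = {\rm \AE}(Y)$ as subspaces of ${\rm Lip}_0(Y)^*$.

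The first step uses the key property $[{\rm Lip}_0(Y)]_1^+ \subseteq [{\rm Lip}_0(Y)]_1 + u$, where $u \in {\rm Lip}_0(Y)$ corresponds to $1_X$, to show that the positive cone $C = \{f : f \geq 0\}$ is $\tau_W$-closed. The property yields the description $C = \bigcup_{\lambda \geq 0} K_\lambda$, where $K_\lambda = \lambda u + \lambda [{\rm Lip}_0(Y)]_1$. Each $K_\lambda$ is $\tau_W$-compact by Banach-Alaoglu applied to $W$, and one checks directly that these sets are nested in $\lambda$. Hence $C \cap r[{\rm Lip}_0(Y)]_1 = K_r \cap r[{\rm Lip}_0(Y)]_1$ is $\tau_W$-closed for every $r > 0$, so $C$ is $\tau_W$-closed by Krein-Smulian.

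The second step uses this closedness to prove that every positive $\phi \in W$ is normal. Given a bounded increasing net $(f_\lambda)$ with pointwise supremum $f \in {\rm Lip}_0(Y)$, let $g$ be a $\tau_W$-cluster point. Since $f_\kappa - f_\lambda \in C$ for all $\kappa \geq \lambda$, $\tau_W$-closedness of $C$ gives $g - f_\lambda \in C$; similarly $\{f - f_\lambda\} \subseteq C$ implies $f - g \in C$. Thus $f_\lambda \leq g \leq f$ for each $\lambda$, forcing $g = f$, so $f_\lambda \to f$ in $\tau_W$ and $\phi(f_\lambda) \to \phi(f)$. Theorem \ref{normalweakstarthm} then places every positive $\phi \in W$ inside ${\rm \AE}(Y)$.

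The remaining step is a Jordan-type decomposition for elements of $W$: for each $\phi \in W$, produce positive $\phi_+, \phi_- \in W$ with $\phi = \phi_+ - \phi_-$, again exploiting the key property together with $\tau_W$-compactness of bounded sets. Combined with the previous step this yields $W \subseteq {\rm \AE}(Y)$; equality then follows because strict containment of two closed 1-norming subspaces with the same dual would contradict Hahn-Banach separation. For strong uniqueness, any isometric isomorphism $T \colon {\rm Lip}_0(Y) \to V^*$ identifies $V$ (via the restriction of $T^*$ to $V \subseteq V^{**}$) with a predual of ${\rm Lip}_0(Y)$, which must equal ${\rm \AE}(Y)$ and hence makes $T$ weak* continuous. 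I expect the Jordan decomposition to be the main obstacle: keeping $\phi_+$ and $\phi_-$ inside $W$ rather than letting them escape into the ambient bidual is delicate and is the analogue of the most subtle part of Sakai's argument.
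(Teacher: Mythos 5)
Your first two steps are sound and essentially match the paper's argument: the identity $[{\rm Lip}(X)]_1^+ = [{\rm Lip}(X)]_1 \cap (1_X + [{\rm Lip}(X)]_1)$ plus Krein--Smulian gives weak* closedness of the positive cone in the alternate topology, and your cluster-point argument for normality of positive elements of $W$ (using closedness of the cone directly, rather than the paper's pairing with positive functionals) is a clean and valid variant. The problem is your third step. The Jordan decomposition you propose --- writing each $\phi \in W$ as $\phi_+ - \phi_-$ with $\phi_\pm \in W$ positive --- is not merely ``delicate''; it is false in general. The paper's Example \ref{posnegex} exhibits an element $m = \sum_{n=0}^\infty (\delta_{2^{-2n}} - \delta_{2^{-2n-1}})$ of ${\rm \AE}([0,1]^e)$ that admits no decomposition as a difference of positive elements of the predual: any candidate $m^+$ would have to satisfy $\langle m^+, 1_{[0,1]}\rangle \geq n+1$ for all $n$. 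So the positive cone of $W$ does not span $W$, and no amount of care with compactness will produce the decomposition.

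The repair is to weaken the goal from spanning to \emph{density of the span}, which is all you need since ${\rm \AE}(Y)$ is norm closed in ${\rm Lip}_0(Y)^*$. You already have the ingredient: since the cone $C$ is $\tau_W$-closed, the Hahn--Banach separation theorem for locally convex spaces gives, for every $f \not\geq 0$, a $\tau_W$-continuous functional --- that is, an element $v \in W$ --- which is nonnegative on $C$ (hence $v \in W^+$) and strictly negative at $f$. Applying this to both $f$ and $-f$ shows that $W^+$ separates the points of ${\rm Lip}_0(Y)$, i.e., no nonzero $f \in {\rm Lip}_0(Y) = W^*$ annihilates $W^+$; hence ${\rm span}(W^+)$ is norm dense in $W$. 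Combining this with your step two (which places $W^+$ inside ${\rm \AE}(Y)$) yields $W \subseteq {\rm \AE}(Y)$, and your concluding argument for equality and for strong uniqueness then goes through. With that substitution your proof coincides in substance with the paper's.
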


\begin{proof}
Let $V$ be a Banach space and suppose $\phi: {\rm Lip}(X) \to V^*$ is a
surjective isometric isomorphism. Write $\langle v,f\rangle$ for the pairing
of $v \in V$ with $f \in {\rm Lip}(X)$, i.e., $\langle v,f\rangle =
\phi(f)(v)$.

This pairing induces an alternate weak* topology on ${\rm Lip}(X)$, which
within this proof I will refer to as the ``a-weak*'' topology. The standard
weak* topology indueced by ${\rm \AE}(Y)$ will be termed the ``s-weak*''
topology. One thing we know about this
a-weak* topology is that $[{\rm Lip}(X)]_1$ is compact relative
to it. Moreover, the comment made just before the theorem yields
$$[{\rm Lip}(X)]_1^+ = ([{\rm Lip}(X)]_1 + 1_X) \cap [{\rm Lip}(X)]_1,$$
so that the positive part of the unit ball is an intersection of two
a-weak* compact sets and is therefore itself a-weak* compact. By the
Krein-Smulian theorem, this implies that ${\rm Lip}(X)^+$, the positive
cone of ${\rm Lip}(X)$, is a-weak* closed.
It then follows from a standard separation theorem for topological vector
spaces that for any $f \in {\rm Lip}(X)$, $f \not\geq 0$,
there exists an a-weak* continuous linear functional which separates $f$
from ${\rm Lip}(X)^+$. That is, there exists $v \in V$ such that
$\langle v,f\rangle < 0$ but $\langle v,g\rangle \geq 0$ for all
$g \in {\rm Lip}(X)^+$. This means that for any $f \not\geq 0$ there
is a positive linear functional $v \in V$, i.e., an element $v \in V^+$,
such that $\langle v,f\rangle < 0$. (Here the order on $V$ is defined via
its pairing with ${\rm Lip}(X)$.) Applying this to $-f$, we also see that for
any $f \neq 0$ there is a $v \in V^+$ such that $\langle v,f\rangle \neq 0$.

We do not know that $V^+$ spans $V$, and indeed this can fail; see Example
\ref{posnegex} below. However, it follows from the previous paragraph that
${\rm span}(V^+)$ is dense in $V$.

Now suppose $(f_\lambda)$ is a bounded increasing net in ${\rm Lip}(X)$
and let $f = \bigvee f_\lambda$. Fix a subnet $(f_{\lambda_\kappa})$
which converges a-weak*, say to $g \in {\rm Lip}(X)$. Then for any
$v \in V^+$ the values $\langle v,f_{\lambda_\kappa}\rangle$
increase to $\langle v, g\rangle$. But also, just by positivity, we
have $\langle v,f_{\lambda_\kappa}\rangle \leq \langle v,f\rangle$
for every $\kappa$ and every $v \in V^+$. So
$\langle v,g\rangle \leq \langle v,f\rangle$ for every $v \in V^+$.
By what we showed earlier about the abundance of positive elements
of $V$, this implies that $g \leq f$; but also, since
$\langle v,f_{\lambda_\kappa}\rangle \leq \langle v,g\rangle$ for all
$\kappa$ and $v$ we similarly have $f_{\lambda_\kappa} \leq g$ for all
$\kappa$, and
therefore $f = \bigvee f_{\lambda_\kappa} \leq g$. So $f = g$, and we
conclude that every a-weak* convergent subnet of $(f_\lambda)$ converges
a-weak* to $f$, which implies that $f_\lambda$ itself converges a-weak* to $f$.

Thus, the pairing of any element of $V^+$ with ${\rm Lip}(X)$ is
normal, and therefore $\phi^*: V^{**} \to {\rm Lip}(X)^*$ takes
$V^+ \subseteq V^{**}$ into ${\rm \AE}(Y) \subseteq {\rm Lip}(X)^*$ by
Theorem \ref{normalweakstarthm}. Since ${\rm span}(V^+)$ is dense in $V$,
it follows that $\phi^*(V) \subseteq {\rm \AE}(Y)$. This implies that
$\phi^*(V) = {\rm \AE}(Y)$ and we conclude that the predual is strongly unique.
\end{proof}

In connection with the previous proof, note that Arens-Eells
spaces in general are not spanned by their positive elements. Here
we use a construction of ${\rm \AE}(X)$ as the closed subspace of
${\rm Lip}_0(X)^*$ generated by the point evaluations $\delta_p: f
\mapsto f(p)$.

\begin{exam}\label{posnegex}
Work on the unit interval $[0,1]$ augmented by a base point as in
Proposition \ref{lipvslip0}; denote this space $[0,1]^e$. Let $m$ be the
sum of elementary
``molecules'' $m = \sum_{n=0}^\infty (\delta_{2^{-2n}} - \delta_{2^{-2n -1}})$.
Since $\|\delta_{2^{-2n}} - \delta_{2^{-2n -1}}\| = 2^{-2n-1}$ in
${\rm \AE}([0,1]^e)$, this sum is absolutely convergent.

Suppose we could write $m = m^+ - m^-$ with $m^+, m^- \in {\rm \AE}([0,1]^e)$
both positive. For each $n$ find a function $f_n \in {\rm Lip}[0,1]$
which satisfies $0 \leq f_n \leq 1$ and which takes the value 1 at the points
$1, 2^{-2}, \ldots, 2^{-2n}$, the value $0$ at the points
$2^{-1}, 2^{-3}, \ldots, 2^{-2n-1}$, and is
constantly $0$ on $[0,2^{-2n-1}]$. Then
$$\langle m^+, 1_{[0,1]}\rangle \geq \langle m^+, f_n\rangle
\geq \langle m, f_n\rangle = n+1$$
for any $n$. Thus $\langle m^+, 1_{[0,1]}\rangle$ cannot be finite, a
contradiction.
\end{exam}

\section{${\rm Lip}_0$ spaces}

If the diameter of $X$ is finite then ${\rm Lip}_0(X)$ is isometrically isomorphic to ${\rm Lip}_0(Y)$ where $Y$ is $X$ rescaled to have diameter at most $1$, and ${\rm Lip}_0(Y)$ is (isometrically) a complemented codimension-one subspace of ${\rm Lip}(Y)$. In a previous version of this manuscript I tried to use that fact to infer that ${\rm Lip}_0(Y)$, and hence ${\rm Lip}_0(X)$ for any finite diameter metric space $X$, has a strongly unique predual. However, the proof I gave was incorrect.

We can still make an inference from ``strong uniqueness for finite diameter metric spaces'' to ``strong uniqueness for complete convex
metric spaces''. A metric space $X$ is {\it convex} if for every distinct $p,q \in X$ there
exists a third distinct point $r$ such that
$\rho(p,q) = \rho(p,r) + \rho(r,q)$.
If $X$ is complete and convex, then for any distinct $p,q \in X$ there is
an isometric embedding of the interval $[0,a] \subset \mathbb{R}$ into $X$
which takes $0$ to $p$ and $a$ to $q$, where $a = \rho(p,q)$.

Given any pointed metric space $X$ and any closed subset $K \subseteq X$
containing the base point, let $\mathcal{I}(K) = \{f \in {\rm Lip}_0(X):
f|_K = 0\}$. This is a weak* closed subspace (relative to the predual
${\rm \AE}(X)$) and we have ${\rm Lip}_0(X)/\mathcal{I}(K) \cong
{\rm Lip}_0(K)$ \cite[Corollary 4.2.7]{W}.

The proof of the following theorem uses a result of Dixmier \cite{D} according to which, for any Banach space $V$, a closed subspace of $V^*$ is a predual
of $V$ in the natural way if and only if the weak topology it induces on $V$ makes the
unit ball $[V]_1$ compact Hausdorff. (Thus $V$ has a unique predual if and only if all
closed subspaces of $V^*$ with this property are isometrically isomorphic, and it has
a strongly unique predual if and only if $V^*$ has exactly one closed subspace with
this property.)

\begin{theo}\label{convex}
Let $X$ be a complete convex metric space. For each $n \in \mathbb{N}$ let $X_n$ be the closed ball of
radius $n$ about the base point in $X$, and suppose that ${\rm Lip}_0(X_n)$ has a strongly unique predual for all $n$. Then ${\rm Lip}_0(X)$ has a
strongly unique predual.
\end{theo}

\begin{proof}
Suppose $W \subseteq {\rm Lip}_0(X)^*$ satisfies Dixmier's criterion.
Then it is a predual of ${\rm Lip}_0(X)$ and gives rise to a weak*
topology. I claim that the subspaces $\mathcal{I}(X_n)$ are closed in
this topology; granting this, it follows that for each $n$ the space
$W_n = \{\phi \in W: \phi|_{\mathcal{I}(X_n)} = 0\} \subseteq
({\rm Lip}_0(X)/\mathcal{I}(X_n))^* \cong ({\rm Lip}_0(X_n))^*$
satisfies Dixmier's criterion for ${\rm Lip}_0(X_n)$ and therefore
equals ${\rm \AE}(X_n) \subseteq {\rm Lip}_0(X_n)^*$. As
$\bigcup_{n=1}^\infty {\rm \AE}(X_n)$ is a dense subspace of
${\rm \AE}(X)$, this implies that ${\rm \AE}(X) \subseteq W$ and
hence that the two spaces are equal, establishing strong uniqueness.

To prove the claim, fix $n$ and let $h \in {\rm Lip}_0(X)$ be the
function $h(p) = \min(\rho(p,e), n)$. We will show that
$f \in [{\rm Lip}_0(X)]_1$ belongs to $\mathcal{I}(X_n)$ if and only if
$L(f \pm h) \leq 1$. This implies that
$$[\mathcal{I}(X_n)]_1 = [{\rm Lip}_0(X)]_1 \cap
([{\rm Lip}_0(X)]_1 + h) \cap ([{\rm Lip}_0(X)]_1 - h),$$
and hence that the unit ball of $\mathcal{I}(X_n)$ is weak* closed
relative to any predual. By the Krein-Smulian theorem, this is enough.

Suppose $f \not\in \mathcal{I}(X_n)$. Then for some $p \in X_n$
either $f(p) > 0$ or $f(p) < 0$. In the former case, $L(f + h) > 1$
because the slope
$$\frac{|(f + h)(p) - (f + h)(e)|}{\rho(p,e)} =
\frac{f(p) + \rho(p,e)}{\rho(p,e)}$$
exceeds $1$, and in the latter case $L(f - h) > 1$ for a similar reason.

Conversely, suppose $f \in \mathcal{I}(X_n)$; we must show that
$L(f \pm h) \leq 1$. Fix $p,q \in X$. If $\rho(p,e), \rho(q,e) \geq n$
then $h(p) = h(q)$ and
$$\frac{|(f \pm h)(p) - (f \pm h)(q)|}{\rho(p,q)}
= \frac{|f(p) - f(q)|}{\rho(p,q)} \leq 1,$$
and if $\rho(p,e), \rho(q,e) \leq n$ then $f(p) = f(q) = 0$ and
$$\frac{|(f \pm h)(p) - (f \pm h)(q)|}{\rho(p,q)}
= \frac{|h(p) - h(q)|}{\rho(p,q)} \leq 1.$$
In the remaining case, say
$\rho(p,e) > n$ and $\rho(q,e) < n$. By completeness and convexity we can
find a point $r$ such that $\rho(p,q) = \rho(p,r) + \rho(r,q)$ and
$\rho(r,e) = n$, and so
\begin{eqnarray*}
\frac{|(f \pm h)(p) - (f\pm h)(q)|}{\rho(p,q)}
&\leq& \max\left(\frac{|(f\pm h)(p) - (f\pm h)(r)|}{\rho(p,r)},
\frac{|(f\pm h)(r) - (f\pm h)(q)|}{\rho(r,q)}\right)\cr
&=& \max\left(\frac{|f(p) - f(r)|}{\rho(p,r)},
\frac{|h(r) - h(q)|}{\rho(r,q)}\right)\cr
&\leq& \max(L(f),L(h)) = 1
\end{eqnarray*}
using the elementary inequality $\frac{b + d}{a + c} \leq
\max(\frac{b}{a},\frac{d}{c})$ for $a,c > 0$ and $b,d \geq 0$.
We conclude that $L(f \pm h) \leq 1$, as desired.
\end{proof}

\begin{coro}\label{banach}
Let $V$ be a Banach space. If ${\rm Lip}_0([V]_1)$ has a strongly unique
predual, then so does ${\rm Lip}_0(V)$.
\end{coro}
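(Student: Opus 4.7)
The plan is to observe that this is an immediate application of Theorem \ref{convex}. A Banach space $X$ is pointed by taking the origin $0$ as the base point (and the choice is irrelevant for the isometric isomorphism class of ${\rm Lip}_0(X)$). So it only remains to verify that $X$, viewed as a metric space under $\rho(p,q) = \|p - q\|$, satisfies the two hypotheses of Theorem \ref{convex}: completeness and convexity.

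Completeness is built into the definition of a Banach space. For convexity, given distinct $p, q \in X$ I would take $r = \tfrac{1}{2}(p + q)$; this is distinct from both $p$ and $q$ since $p \neq q$, and a direct norm computation gives
$$\rho(p,r) + \rho(r,q) = \tfrac{1}{2}\|p - q\| + \tfrac{1}{2}\|p - q\| = \|p - q\| = \rho(p,q),$$
which is exactly the convexity condition. Theorem \ref{convex} then applies and yields the strong uniqueness of the predual of ${\rm Lip}_0(X)$.

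There is essentially no obstacle here; the whole substance of the corollary is already absorbed into Theorem \ref{convex}, and the present statement is just the observation that Banach spaces furnish the most important class of complete convex metric spaces.
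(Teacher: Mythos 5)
Your proof is correct and is exactly the argument the paper intends: the corollary is stated without proof as an immediate consequence of Theorem \ref{convex}, since a Banach space is complete and (via midpoints) convex. Your verification of the convexity condition with $r = \tfrac{1}{2}(p+q)$ is the standard and correct check.
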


\end{document}